\newtheorem{lemma}{Lemma}
\newtheorem{theorem}{Theorem}
\newtheorem{remark}{Remark}
\newtheorem{proposition}{Proposition}
\newcommand{\graph}[0]{{\mathcal{G}}}
\newcommand{\tr}{\mathbf{tr}}
\newcommand{\Nodes}{\mathcal{N}}
\newcommand{\Verts}{\mathcal{V}}
\newcommand{\Edges}{\mathcal{E}}
\newcommand{\Htwo}{{\mathcal{H}_2}}
\newcommand{\reals}{{\mathbb{R}}}
\newcommand{\sm}{\scalebox{0.75}[1.0]{$-$}}
\title{\LARGE \bf
Time Scale Design for Network Resilience
}
\author{Dillon R. Foight, Mathias Hudoba de Badyn, and Mehran Mesbahi
\thanks{The research of the authors was supported by the NSF GFRP under grant number DGE-1762114, NSERC under funding reference number CGSD2-502554-2017, the U.S. ARL and the U.S. ARO under contract number W911NF-13-1-0340, and the U.S. AFOSR under grant number FA9550-16-1-0022.~\copyright~2019~IEEE }%
\thanks{The authors are with the William E. Boeing Department of Aeronautics and Astronautics at the University of Washington, Seattle, WA 98195 USA. MHdB is also with the Automatic Control Laboratory at ETH, Z\"{u}rich, Switzerland. 
  e-mails: \texttt{\{dfoight,hudomath,mesbahi\}@uw.edu}.}%
}
\begin{document}

\maketitle
\thispagestyle{empty}
\pagestyle{empty}

\begin{abstract}
In this paper we consider the $\Htwo$-norm of networked systems with multi-time scale consensus dynamics. We develop a general framework for such systems that allows for edge weighting, independent agent-based time scales, as well as measurement and process noise. From this general system description, we highlight an interesting case where the influences of the weighting and scaling can be separated in the design problem. We then consider the design of the time scale parameters for minimizing the $\Htwo$-norm for the purpose of network resilience.
\end{abstract}


\section{INTRODUCTION}

Dynamical systems operating over networks appear in many natural and cyber-physical systems. A popular model of such dynamic processes is \emph{consensus}, which has been widely used for a variety of control and estimation applications, ranging from robotics and swarm deployment~\cite{Joordens2009,Hudobadebadyn2018}, distributed Kalman filtering~\cite{Olfati-Saber2005,HudobadebadynFillt2017}, and multi-agent systems~\cite{Chen2013a,Saber2003,Tanner2004}. A natural question in such scenarios is how the underlying network topology affects the behavior of the dynamics operating over the network. This question has attracted significant interest in systems and control communities, particularly as certain notions of performance and control can be directly related to graph theoretic properties of the network. Of particular interest for this work are system-theoretic measures such as $\Htwo$ and $\mathcal{H}_{\infty}$ system norms. For networked dynamical systems, the $\Htwo$-norm can be interpreted as a measure of how input energy is attenuated over the network, or how noise drives deviations from the natural consensus state~\cite{Siami2014a}. 

In light of these interpretations, there have been several works investigating the characterization of $\Htwo$ performance for consensus networks. In~\cite{Chapman2013a,Chapman2015,Hudobadebadyn2019}, the performance of leader-follower networks is considered, and algorithms for rewiring and reweighting the network for optimal noise rejection are discussed. Similarly,~\cite{Bamieh2012,Patterson2010,Patterson2014} have utilized the $\Htwo$-norm as a measure of coherence in networks and considered problems such as local feedback laws and leader selection to promote coherence. Most relevant to the present contribution, the works~\cite{Zelazo2011a,Zelazo2013} investigated the impact of cycles on the $\Htwo$ performance of noise-driven consensus networks. The examination of networks under noise inputs is important for real-world implementation of consensus onto physical systems, and for considering network resilience in the presence of adversarial noise injections.

A fully general model of performance of agents operating over a network should also include consideration of their individual dynamics. However, a common assumption within the consensus dynamics literature is that the agent dynamics are identical single or double integrators. This common model can be extended to encompass a class of heterogeneous agents by considering the case where individual agents' states evolve at differing rates. This is inherently a multi-time scale problem, and the analysis of such problems has historically offered techniques for formal description and controller synthesis for complex systems~\cite{Kokotovic1999}. Similar formulations arise in areas such as electrical networks~\cite{Dorfler2018} and power networks with generator inertia~\cite{Chakrabortty2013}. Thus, analysis of such multiple time scale models can increase the applicability of the consensus protocol to a wider range of real-world systems.
There is a growing body of literature that addresses the complications that naturally arise from the integration of multiple time scales into consensus, starting with the discussion of the consensus value for multi-rate integrators in~\cite{Olfati-Saber2004}. Issues such as convergence \cite{Pedroche2014}, stability \cite{Chapman2016,Awad2018}, controller design~\cite{Rejeb2016,Rejeb2018}, as well as single-influenced consensus performance~\cite{Foight2019} have since been addressed for such multi-scale networks. This existing literature has demonstrated that the inclusion of time scales into the consensus protocol can have a significant impact on the networked system, as well as shown that graph-theoretic interpretations of system-theoretic properties are not completely lost.

In this paper, we consider design problems for networked problems using the $\Htwo$ system norm as a metric of network's resiliency: a small $\Htwo$-norm characterizes a network that is resilient to external input. We consider a general formulation for single-integrator consensus that includes both edge weighting and nodal time scales, as well as process and measurement noises. Drawing from the work in~\cite{Zelazo2011a}, we transform the general consensus problem to one over the edge states, and consider design of the agent time scales with a focus on network resiliency. The main contributions of the paper are a similarity transformation yielding the dynamics of the edge states for scaled and weighted single integrator consensus, a method for separating the contributions of edge weighting and node time scales on the $\Htwo$ performance, and design problems for time scale assignment. 

The paper is organized as follows. In \S\ref{sec:math-prel}, we outline the notation and terminology used in the paper. We then introduce the problem setup in \S\ref{sec:setup}, followed by the main results of the $\Htwo$ performance metric formulation in \S\ref{sec:h2-results}, and the design problems for time scales in \S\ref{sec:ts-design} and \S\ref{sec:gu-design}. 

\section{MATHEMATICAL PRELIMINARIES}
\label{sec:math-prel}

Here, we provide a brief overview of the notation and terminology used throughout the paper, as well as relevant graph theoretic concepts. Column vectors are denoted as $x \in \reals^n$. Special vectors include the vector of all ones (zeros), $\mathbf{1}$ ($\mathbf{0}$), the vector of diagonal elements in a matrix, $\mathrm{diag}(M)$, and Euclidean basis vectors, $e_i \in \reals^n$, where the $i$ denotes the index of the non-zero element. Matrices will be denoted as $M \in \reals^{m \times n}$. The identity matrix will be denoted by $I$. Time-dependent quantities will be denoted as $x(t)$.

This paper considers dynamics governed by the interconnections of multi-rate, single integrator agents over connected, undirected, weighted communication graphs. In this formulation, we can consider a graph object defined by $\graph= \left(\Verts,\Edges\right)$, where $\Verts$ is the set of agents (nodes), $\Edges$ is the set of edges. Associated with each graph are $W$, a diagonal matrix of edge weights, and $E$, a diagonal matrix of node time scaling factors. Individual agents will be indexed by subscripts, e.g. $\nu_i \in \Verts$ to represent the $i$-th agent where $1\leq i \leq |\Verts|$. If $(i,j) \in \Edges$, the $i$-th and $j$-th agents are connected by an edge ($i \sim j$), and they are referred to as adjacent agents or neighbors.  For a given agent, $\nu_i$, $N(i) = \{j\ |\ i\sim j\ \forall j \in \Verts\}$ denotes the neighbors of $i$, and $\text{deg}(\nu_i) = |N(i)|$ denotes the unweighted degree of $i$. The edge set can be ordered by a mapping, $\kappa(\cdot)$, such that $l = \kappa(ij)$ if and only if $(i,j) \in \Edges$. By this mapping, we can denote the weight on edge $\kappa(ij)$ by $w_l$ or $w_{ij}$, interchangeably. The edge weights are assumed to be non-negative and symmetric, that is $w_{ij} = w_{ji}$. The scaling parameter of the $i$-th node is denoted by $\epsilon_i > 0$. The incidence matrix, $D(\graph)$ is a $|\Verts| \times |\Edges|$ matrix, where the $l$-th column denotes an edge between two nodes in the form of an edge vector, $a_{\kappa(ij)} = e_i - e_j$ (equivalently, $e_j - e_i$). Of particular interest will be the Laplacian-type matrices associated with the graph, which will be denoted by $L(\graph)$ or $L_e(\graph)$, where the subscript $e$ denotes an edge Laplacian. These matrices will be formally defined in \S\ref{sec:setup}. 


\section{PROBLEM SETUP} \label{sec:setup}

In this section, we describe a general formulation for consensus over a network with non-negative edge weighting, positive node time scaling, while accounting for possible measurement and process noise. The scaled consensus problem is derived from considering a group of $n$ multi-rate integrators~\cite{Olfati-Saber2004}, with zero-mean Gaussian process noise, $\omega_i(t)$ such that $\mathbf{E}\left[\omega(t) \omega(t)^T\right] = \mathrm{diag}(\sigma_{\omega_i}^2) $ for all $i \in \Verts$,
\begin{align}
\epsilon_i \dot{x}_i(t) =  u_i(t) + \omega_i(t), \label{eq:consensus}
\end{align}
where $x_i$ is the (scalar) state of the $i$-th agent, $\epsilon_i$ is the associated time scaling parameter, $u_i$ is the control input, and $\omega_i$ is the process noise that pollutes the control signal at the node level. A weighted, decentralized feedback controller that seeks to bring agents into consensus, but is impeded by measurement noise between adjacent agents, $v_{ij}(t)$ such that $\mathbf{E}\left[v(t) v(t)^T\right] = \mathrm{diag}(\sigma_{v_{ij}}^2) $ for all $(i,j) \in \Edges$, is given by,
\begin{align}
u_i(t) & = \sum_{j\in N(i)} \left[w_{ij} (x_j(t) - x_i(t)) + v_{ij}(t)\right] \nonumber \\
u(t) & = \sm D(\graph) W D(\graph)^T x(t) + D(\graph) v(t), \label{eq:control}
\end{align}
\noindent where $W$ is the matrix of edge weights with properties detailed in \S\ref{sec:math-prel}, $v(t)$ is the stacked vector of measurement noises, and $u(t)$ is the vector-valued input to all states. Applying~\eqref{eq:control} to the matrix version of~\eqref{eq:consensus} gives the general, time scaled and weighted consensus problem with process and measurement noise,
\begin{equation} \label{eq:model}
    \dot{x}(t) = \sm E^{\sm 1} L_w(\graph) x(t) + \begin{bmatrix}E^{\sm 1} & \sm E^{\sm 1} D(\graph)\end{bmatrix} \begin{bmatrix} \omega(t) \\ v(t) \end{bmatrix} \\
\end{equation}
\noindent where $L_w(\graph) = D(\graph) W D(\graph)^T$ is the weighted Laplacian matrix. Later in this section, we will consider two different options for output from~\eqref{eq:model}, which will allow us to assess how the available system output impacts the network performance. 

As noted by~\cite{Zelazo2011a}, for a connected graph, the zero eigenvalue of the Laplacian matrix precludes reasoning about the $\mathcal{H}_2$ performance of~\eqref{eq:model}. This property of the Laplacian matrix persists in the scaled, weighted case~\cite{Foight2019}, so as in~\cite{Zelazo2011a}, we will appeal to a similarity transformation that isolates the zero eigenvalue, presented in the following theorem.

\begin{theorem} \label{thm:similarity}
The scaled, weighted graph Laplacian for a connected graph, $\graph{}$, with time scale matrix $E$ and weight matrix $W$, given by $L_{w,s} = E^{\sm 1}D(\graph)W D(\graph)^T$, is similar to
\[
        \begin{bmatrix}
        L_{e,s} RWR^T & 0\\ 0 & 0
        \end{bmatrix},
\]
    where $L_{e,s} = D(\graph_\tau)^T E^{-1} D(\graph_\tau)$ is the edge Laplacian for a spanning tree $\graph_\tau$ which is symmetrically ``weighted'' by the time scaling parameters, and $R$ is the basis of the cut space of $\graph$ as defined as in~\cite{Zelazo2011a}: 
$
R(\graph) = \begin{bmatrix} I & T^{c}_\tau \end{bmatrix}
$
\noindent with,
\[T^c_\tau = (D(\graph_\tau)^T D(\graph_\tau))^{-1} D(\graph_\tau)^T D(\graph_c).\]
\noindent Here, the $\tau$ and $c$ subscripts denote the incidence matrices for a spanning tree and the complementary edges in $\graph$, respectively. 

\end{theorem}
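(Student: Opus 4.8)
The plan is to produce an explicit similarity, i.e. an invertible $S$ with $S^{-1}L_{w,s}S$ equal to the stated block matrix, rather than argue spectrally. The first ingredient I would import from~\cite{Zelazo2011a} is the cut-space factorization of the incidence matrix: since $\graph$ is connected, $\range D(\graph_\tau)=\range D(\graph)$ (both of dimension $n-1$, where $n:=|\Verts|$), so the cotree columns satisfy $D(\graph_c)=D(\graph_\tau)T^c_\tau$ with $T^c_\tau=(D(\graph_\tau)^TD(\graph_\tau))^{-1}D(\graph_\tau)^TD(\graph_c)$, and therefore $D(\graph)=D(\graph_\tau)R(\graph)$. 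Substituting this into $L_{w,s}=E^{-1}D(\graph)WD(\graph)^T$ gives the factorization
\[
L_{w,s}=E^{-1}D(\graph_\tau)\,(RWR^T)\,D(\graph_\tau)^T,
\]
which already exhibits the weighted-edge block $RWR^T$ and, after right-multiplication by $E^{-1}D(\graph_\tau)$, will surface $L_{e,s}=D(\graph_\tau)^TE^{-1}D(\graph_\tau)$.

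Two elementary observations dictate the choice of $S$. First, $L_{w,s}\mathbf{1}=0$ since $D(\graph_\tau)^T\mathbf{1}=0$. Second --- and this is where the time scales genuinely intervene --- $L_{w,s}$ is not symmetric and its \emph{left} null vector is $E\mathbf{1}$ rather than $\mathbf{1}$, because $(E\mathbf{1})^TE^{-1}D(\graph_\tau)=\mathbf{1}^TD(\graph_\tau)=0$; the non-kernel part of the transformation must therefore be adapted to $(E\mathbf{1})^\perp$, not $\mathbf{1}^\perp$. I would set
\[
S:=\begin{bmatrix} E^{-1}D(\graph_\tau)\,L_{e,s}^{-1} & \mathbf{1}\end{bmatrix}\in\reals^{n\times n},
\]
observing that $L_{e,s}\succ 0$ (hence invertible) as $D(\graph_\tau)$ has full column rank and $E^{-1}\succ 0$. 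The first $n-1$ columns of $S$ are linearly independent (a product of full-rank factors) and lie in the $(n-1)$-dimensional space $(E\mathbf{1})^\perp$, so they span it, while $\mathbf{1}\notin(E\mathbf{1})^\perp$ because $\mathbf{1}^TE\mathbf{1}=\sum_i\epsilon_i>0$; hence $S$ is invertible.

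The remaining work is a direct computation of $L_{w,s}S$ together with a column-matching argument. The last column is $L_{w,s}\mathbf{1}=0$; for the first block, the factorization above and $D(\graph_\tau)^TE^{-1}D(\graph_\tau)=L_{e,s}$ give $L_{w,s}\,E^{-1}D(\graph_\tau)L_{e,s}^{-1}=E^{-1}D(\graph_\tau)RWR^T=\bigl(E^{-1}D(\graph_\tau)L_{e,s}^{-1}\bigr)\bigl(L_{e,s}RWR^T\bigr)$. Writing $L_{w,s}S=SN$ and equating block columns, the zero last column forces the right block column of $N$ to vanish (using that $(E\mathbf{1})^\perp$ and $\mathrm{span}\{\mathbf{1}\}$ intersect trivially and that $E^{-1}D(\graph_\tau)L_{e,s}^{-1}$ is injective), and in the first block the residual $E^{-1}D(\graph_\tau)L_{e,s}^{-1}(N_{11}-L_{e,s}RWR^T)$, which lies in $(E\mathbf{1})^\perp$, must equal a multiple of $\mathbf{1}$, hence is zero, giving $N_{11}=L_{e,s}RWR^T$. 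Thus $S^{-1}L_{w,s}S=\begin{bmatrix}L_{e,s}RWR^T&0\\0&0\end{bmatrix}$.

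I expect the main obstacle to be precisely this broken symmetry: in the unscaled setting of~\cite{Zelazo2011a} one may use an orthonormal basis of $\mathbf{1}^\perp$, whereas here one must work relative to $(E\mathbf{1})^\perp$ and keep track of the order of the non-commuting factors $L_{e,s}$ and $RWR^T$. Putting $L_{e,s}^{-1}$ on the right of the first block of $S$ is exactly what yields the factors in the stated order $L_{e,s}RWR^T$; the bare choice $S=[\,E^{-1}D(\graph_\tau)\ \ \mathbf{1}\,]$ instead produces $RWR^TL_{e,s}$, which one then conjugates by $\mathrm{diag}(L_{e,s}^{-1},1)$ to recover the stated form. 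It is also worth noting that $RWR^T$ may be singular when some edge weights vanish, which is why $S$ is built from the always-invertible $L_{e,s}$ rather than from $RWR^T$.
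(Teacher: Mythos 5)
Your proposal is correct and is essentially the paper's own proof: the similarity transformation you construct, $S=[\,E^{-1}D(\graph_\tau)L_{e,s}^{-1}\ \ \mathbf{1}\,]$, is exactly the paper's $S_v$, and your observation that the left null vector is $E\mathbf{1}$ corresponds precisely to the second row $\tfrac{1}{\epsilon_s}\mathrm{diag}(E)^T$ of the paper's explicit $S_v^{-1}$. The only cosmetic difference is that the paper writes down $S_v^{-1}$ and computes $S_v^{-1}L_{w,s}S_v$ directly, whereas you verify $L_{w,s}S=SN$ and argue invertibility of $S$ by a span argument; both are sound.
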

\begin{proof}
Following \cite{Zelazo2011a}, we define the similarity transforms,
\begin{align*}
    S_v(\graph) &= \begin{bmatrix}
        E^{-1}D(\graph_\tau)\left( D(\graph_\tau) ^T E^{-1} D(\graph_\tau)\right)^{-1} & \mathbf{1}
    \end{bmatrix}\\
    S_v(\graph)^{-1} &= \begin{bmatrix}
        D(\graph_\tau)^T \\ \frac{1}{\epsilon_s} \mathrm{diag}(E)^T
    \end{bmatrix},
\end{align*}  
    where $\epsilon_s:=\sum_{i=1}^n \epsilon_i$ is the sum of the time scale parameters. Then, denoting $D_\tau:= D(\graph_\tau)$, $D:=D(\graph)$ (and adopting an analogous notation for other system matrices), we have,
    \begin{align*}
        &S_v^{-1} L_{w}(\graph) S_v\ \\
 &=        \begin{bmatrix}
        D_\tau^T E^{-1} D_\tau R W R^T D_\tau^T E^{-1} D_\tau \left( D_\tau E^{-1} D_\tau\right)^{-1} & 0\\ \frac{1}{\epsilon_s} \mathbf{1}^TDD^T D_\tau \left(D_\tau E^{-1} D_\tau\right)^{-1} & 0
        \end{bmatrix}\\
&=         \begin{bmatrix}
        L_{e,s} R W R^T & 0\\ 0 & 0
        \end{bmatrix},
    \end{align*}
    as desired.
\end{proof}

By noting that $S_v x_e(t) = x(t)$, the scaled, weighted consensus model with noise in~\eqref{eq:model} is equivalent to,
\begin{equation} \label{eq:edgemodel}
    \begin{aligned}
    \dot{x}_e(t) & = \begin{bmatrix} -L_{e,s}(\graph_\tau) R(\graph) W R(\graph)^T & 0 \\ 0 & 0 \end{bmatrix} x_e(t)  \\
    & \quad + \begin{bmatrix} D_{\tau}^T E^{-1} & -L_{e,s}(\graph_\tau) R(\graph) \\ \frac{\mathbf{1}^T}{\epsilon_s} & 0 \end{bmatrix} \begin{bmatrix} \omega(t) \\ v(t) \end{bmatrix},
    \end{aligned}
\end{equation}
\noindent where $L_{e,s}(\graph_\tau)$ is again the scaled edge Laplacian for a spanning tree $\graph_\tau$. We can note that the form of~\eqref{eq:edgemodel} naturally suggests a partitioning of the edge state variable into a set of states in the spanning tree and those in the consensus space (span$(\mathbf{1})$), $x_e(t) = \begin{bmatrix} x_\tau(t) & x_{\mathbf{1}}(t) \end{bmatrix}$. The resulting dynamics for the spanning tree states is taken from~\eqref{eq:edgemodel} as,
\begin{equation} \label{eq:treemodel}
\Sigma_\tau := \left\{
    \begin{aligned}
    \dot{x}_\tau (t) & = -L_{e,s}(\graph_\tau) R(\graph) W R(\graph)^T x_\tau (t) \\
    & \quad + D_{\tau}^T E^{-1} \Omega \hat{\omega} - L_{e,s}(\graph_\tau) R(\graph) \Gamma \hat{v} \\
    z(t) & = R(\graph)^T x_\tau(t),
    \end{aligned} \right.
\end{equation}
\noindent where $\hat{v}$ and $\hat{w}$ are normalized error signals, $\Omega = \mathbf{E}\left[w(t) w(t)^T\right]$, and $\Gamma = \mathbf{E}\left[v(t) v(t)^T\right]$. An important note is that the output of~\eqref{eq:treemodel} contains information of the cycle states due to the inclusion of $R(\graph)$ and the fact that the cycle states are linear combinations of the tree states~\cite{Zelazo2011a}. We can also consider the same edge state model with output given solely by the spanning tree states,
\begin{equation} \label{eq:treeonly}
\hat{\Sigma}_\tau := \left\{
    \begin{aligned}
    \dot{x}_\tau (t) & = -L_{e,s}(\graph_\tau) R(\graph) W R(\graph)^T x_\tau (t) \\
    & \quad + D_{\tau}^T E^{-1} \Omega \hat{\omega} - L_{e,s}(\graph_\tau) R(\graph) \Gamma \hat{v} \\
    z(t) & = x_\tau(t).
    \end{aligned} \right.
\end{equation}
The $\mathcal{H}_2$ performance of~\eqref{eq:treemodel} and~\eqref{eq:treeonly} are given by $\mathbf{tr}(R^T X^\star R)$ and $\mathbf{tr}(X^\star)$, respectively~\cite{skogestad2001}, where $X^\star$ is the positive-definite solution to the Lyapunov equation,
\begin{align}
& \sm L_{e,s}^\tau R W R^T X - X R W R^T L_{e,s}^\tau + D_{\tau}^T E^{\sm 1} \Omega \Omega^T E^{\sm 1} D_{\tau} + \nonumber \\
& \quad L_{e,s}^\tau R \Gamma \Gamma^T R^T L_{e,s}^\tau = 0. \label{eq:lyapunov}
\end{align}
In general, the addition of the weighting and scaling precludes a closed form solution to~\eqref{eq:lyapunov} (which is desirable to find $X$'s dependence on $E,W$), and numeric results yield a nonlinear mixing of weights and scaling parameters in the entries of $X$. However, in the following section we will outline a case when analytic solutions to~\eqref{eq:lyapunov} exist, providing insights for design of edge weights and scaling parameters for optimal performance. 

\section{Main Results}
\label{sec:results}


In this section, we investigate analytic results for the $\mathcal{H}_2$ performance of~\eqref{eq:treemodel} and~\eqref{eq:treeonly}, as well as design problems for time scale assignment.

\subsection{Analytic Solutions to the Lyapunov Equation} \label{sec:h2-results}

As previously noted, the inclusion of time scale parameters and weighting precludes a by-inspection solution for arbitrary covariances $\Omega$ and $\Gamma$. Let us investigate, then, the impact that the choice of covariance has on the performance of the system. We can observe that~\eqref{eq:treemodel} and~\eqref{eq:treeonly} have identical input matrices, $B:= [D_\tau^T E^{-1} \Omega\ -L_{e,s} R \Gamma]$. Also, recall that we can parameterize the $\Htwo$-norm in terms of the observability gramian and the input matrix for a system, $\Htwo^2 = \mathbf{tr}(B^T P_{O} B) = \mathbf{tr}(BB^T P_{O})$, where $P_O$ is the observability gramian for~\eqref{eq:treemodel} or~\eqref{eq:treeonly}~\cite{skogestad2001}. The observability gramian is independent of any choice of covariance; thus, in the following lemma we can bound the $\Htwo$ performance while isolating terms that depend on the covariances.

\begin{lemma} \label{h2bound}
Under the assumption that~\eqref{eq:treemodel} and~\eqref{eq:treeonly} are observable, the $\Htwo$ performance can be bounded by,
\[
\lambda_{\min}(BB^T) \tr(P_O) \leq \tr(B^T P_O B) \leq \lambda_{\max}(BB^T) \tr(P_O),
\]
where $B = [D_\tau^T E^{-1} \Omega\ -L_{e,s} R \Gamma]$ and $P_O$ is the observability gramian for~\eqref{eq:treemodel} or~\eqref{eq:treeonly}.
\end{lemma}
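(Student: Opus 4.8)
The plan is to reduce the claimed inequality to a standard fact about the trace of a product of positive semidefinite matrices. Writing $M := BB^T$ (a symmetric positive semidefinite matrix) and $P_O$ for the observability gramian (symmetric positive semidefinite, in fact positive definite under the observability assumption), the quantity we want to bound is $\tr(B^T P_O B) = \tr(P_O B B^T) = \tr(P_O M)$. So everything comes down to showing that for symmetric PSD matrices $M$ and $P_O$,
\[
\lambda_{\min}(M)\,\tr(P_O) \;\leq\; \tr(M P_O) \;\leq\; \lambda_{\max}(M)\,\tr(P_O).
\]

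First I would record the cyclic property of the trace to get $\tr(B^T P_O B) = \tr(P_O B B^T) = \tr(P_O M)$, and note $M = BB^T \succeq 0$ with $B$ as given in the statement. Then I would diagonalize $M = U \Lambda U^T$ with $\Lambda = \mathrm{diag}(\lambda_1,\dots)$ the eigenvalues of $M$ and $U$ orthogonal, so that $\tr(M P_O) = \tr(\Lambda U^T P_O U) = \sum_i \lambda_i [U^T P_O U]_{ii}$. Since $P_O \succeq 0$, the diagonal entries $q_i := [U^T P_O U]_{ii}$ are all nonnegative, and $\sum_i q_i = \tr(U^T P_O U) = \tr(P_O)$. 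Therefore $\sum_i \lambda_i q_i$ is a nonnegative-weighted sum of the $\lambda_i$ with weights summing to $\tr(P_O)$, which immediately yields $\lambda_{\min}(M)\tr(P_O) \le \sum_i \lambda_i q_i \le \lambda_{\max}(M)\tr(P_O)$. Substituting back $\lambda_{\min}(M) = \lambda_{\min}(BB^T)$ and likewise for the max completes the argument.

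There is essentially no hard step here; the only thing to be careful about is bookkeeping — making sure the observability assumption is used precisely where it is needed. Strictly speaking, the two-sided inequality above holds for \emph{any} PSD $P_O$, so observability is not required for the bound itself; its role is to guarantee that $P_O$ (the solution of the observability Lyapunov equation, equivalently the solution $X^\star$ to~\eqref{eq:lyapunov} in the appropriate output coordinates) is well-defined and finite, i.e. that the $\Htwo$-norm is actually finite and equals $\tr(B^T P_O B)$. I would state this explicitly so the reader sees that observability enters through the existence of $P_O$ and the identity $\Htwo^2 = \tr(B^T P_O B)$, not through the inequality manipulation.

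One alternative phrasing, if a more matrix-analytic presentation is preferred, is to use the standard bounds $\lambda_{\min}(M) P_O \preceq M^{1/2} P_O M^{1/2}$ is \emph{not} true in general, so I would avoid that route; the eigenvalue-weighting argument above is the clean one. Either way the proof is short, and the main obstacle — if one wants to call it that — is simply being precise that the weights $q_i$ are nonnegative and sum to $\tr(P_O)$, which is where positive semidefiniteness of $P_O$ is used.
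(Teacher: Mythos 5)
Your proof is correct and follows essentially the same route as the paper: the paper simply cites the trace inequality $\lambda_{\min}(M)\tr(P_O) \le \tr(MP_O) \le \lambda_{\max}(M)\tr(P_O)$ for symmetric $M = BB^T$ and positive (semi)definite $P_O$ from an external reference, whereas you prove that same inequality inline via the eigendecomposition of $BB^T$. Your additional remark that observability enters only through the existence and finiteness of $P_O$ (and the identity $\Htwo^2 = \tr(B^T P_O B)$), not through the inequality itself, is accurate and a worthwhile clarification.
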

\begin{proof}
From the assumption of observability, $P_O$ is positive definite, and $B B^T$ is symmetric. Applying~\cite[Theorem 1]{fang1994} then gives the result.
\end{proof}

The above observation leads to a bound for the $\Htwo$ performance for any choice of covariances. In order to further separate out the effect of  covariances, however, we will find the following lemma useful, whose proof is omitted for brevity.

\begin{lemma} \label{minmax}
Given Hermitian $M \in \reals^{n\times n}$, and $Z \in \reals^{n\times m}$, if for $x \in \reals^{m}$, $Zx = \mathbf{0} \Rightarrow x = \mathbf{0}$, then,
\[\lambda_{\max}(Z^T M Z) \leq \lambda_{\max}(M) \lambda_{\max}(Z^T Z),\]
and
\[\lambda_{\min}(Z^T M Z) \geq \lambda_{\min}(M) \lambda_{\min}(Z^T Z).\]
\end{lemma}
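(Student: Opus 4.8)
The plan is to reduce both inequalities to the Courant--Fischer (Rayleigh quotient) characterization of the extreme eigenvalues of the symmetric matrix $Z^T M Z$ and then push the quadratic form through the extreme eigenvalues of $M$ and of $Z^T Z$ one factor at a time.

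For the upper bound I would start from
\[
\lambda_{\max}(Z^T M Z) = \max_{x \neq \mathbf{0}} \frac{x^T Z^T M Z x}{x^T x} = \max_{x \neq \mathbf{0}} \frac{(Zx)^T M (Zx)}{x^T x}.
\]
For a fixed $x$, the Rayleigh bound for $M$ gives $(Zx)^T M (Zx) \le \lambda_{\max}(M)\,\|Zx\|^2$, and the Rayleigh bound for $Z^T Z$ gives $\|Zx\|^2 \le \lambda_{\max}(Z^T Z)\,\|x\|^2$; chaining them yields $(Zx)^T M (Zx) \le \lambda_{\max}(M)\lambda_{\max}(Z^T Z)\|x\|^2$, and dividing by $\|x\|^2$ and maximizing over $x \neq \mathbf{0}$ gives the first claim. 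The lower bound is the mirror image: from
\[
\lambda_{\min}(Z^T M Z) = \min_{x \neq \mathbf{0}} \frac{(Zx)^T M (Zx)}{x^T x}
\]
I would use $(Zx)^T M (Zx) \ge \lambda_{\min}(M)\|Zx\|^2 \ge \lambda_{\min}(M)\lambda_{\min}(Z^T Z)\|x\|^2$ and minimize over $x$. The injectivity hypothesis $Zx = \mathbf{0} \Rightarrow x = \mathbf{0}$ enters to guarantee $\lambda_{\min}(Z^T Z) > 0$, so the lower bound is non-vacuous and $Z^T M Z$ is positive definite whenever $M$ is --- which is the setting in which the lemma is subsequently applied.

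The one subtlety to watch --- the main obstacle, such as it is --- is the eigenvalue signs: passing from the two one-sided inequalities to the products $\lambda_{\max}(M)\lambda_{\max}(Z^T Z)$ and $\lambda_{\min}(M)\lambda_{\min}(Z^T Z)$ is legitimate only because the relevant extreme eigenvalue of $M$ is nonnegative, so that multiplying the inequalities preserves their direction. I would therefore carry the nonnegativity of $\lambda_{\max}(M)$ and $\lambda_{\min}(M)$ as the property actually used (automatic for the positive semidefinite $M$ --- Gramians and matrices $BB^T$ --- that appear in Lemma~\ref{h2bound} and below); beyond this sign bookkeeping the argument needs no further machinery.
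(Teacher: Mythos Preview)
Your approach is essentially the paper's own: both invoke the Courant--Fischer Rayleigh-quotient characterization and bound the two factors separately (the paper multiplies and divides by $\|Zx\|^2$ to write the quotient as a product of two Rayleigh quotients, which is only a cosmetic variant of your direct bound on the numerator). Your caveat about needing $\lambda_{\max}(M)\ge 0$ and $\lambda_{\min}(M)\ge 0$ is well placed --- the lemma as stated (merely Hermitian $M$) actually fails without positive semidefiniteness, and the paper's own argument tacitly relies on the same sign assumption when splitting the extremum of a product.
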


Now, we can combine Lemmas~\ref{h2bound} and~\ref{minmax} to give a bound on the $\Htwo$ performance based on the properties of the covariance matrices. 

\begin{lemma} \label{fullbound}
Given observable edge consensus dynamics of the form~\eqref{eq:treemodel} or~\eqref{eq:treeonly}, the $\Htwo$ performance can be bounded by,
\begin{align*}
& \left[\lambda_{\min}(\Omega \Omega^T)\lambda_{\min}(B_\tau B_\tau^T) + \lambda_{\min}(\Gamma \Gamma^T)\lambda_{\min}(B_c B_c^T) \right]  \\
& \leq \frac{\Htwo^2}{P} \leq \left[\lambda_{\max}(\Omega \Omega^T)\lambda_{\max}(B_\tau^T) + \lambda_{\max}(\Gamma \Gamma^T)\lambda_{\max}(B_c^T) \right]
\end{align*}
where $B_\tau^T := D_\tau^T E^{-1}$, $B_c^T := L_{e,s} R$ and $P = \tr(P_O)$ is the trace of the associated observability gramian.
\end{lemma}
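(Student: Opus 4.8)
The plan is to chain Lemma~\ref{h2bound}, a block expansion of $BB^T$, and Lemma~\ref{minmax}. Lemma~\ref{h2bound} already does most of the work, reducing the task to estimating $\lambda_{\min}(BB^T)$ and $\lambda_{\max}(BB^T)$: since $\Htwo^2 = \tr(BB^T P_O)$ lies between $\lambda_{\min}(BB^T)\tr(P_O)$ and $\lambda_{\max}(BB^T)\tr(P_O)$, with $P := \tr(P_O) > 0$ by observability, it suffices to bound the extreme eigenvalues of $BB^T$ in terms of the covariance spectra and the two structural blocks.

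First I would expand $BB^T$ using the two-column block form $B = \begin{bmatrix} D_\tau^T E^{-1}\Omega & -L_{e,s}R\Gamma \end{bmatrix} = \begin{bmatrix} B_\tau^T\Omega & -B_c^T\Gamma \end{bmatrix}$, so that $BB^T = B_\tau^T(\Omega\Omega^T)B_\tau + B_c^T(\Gamma\Gamma^T)B_c$ (no cross terms, since the two noise channels sit in disjoint blocks). Next I would apply Weyl's inequalities for sums of Hermitian matrices --- subadditivity of $\lambda_{\max}$ and superadditivity of $\lambda_{\min}$, or equivalently the corresponding additivity bounds for the Rayleigh quotient --- to split that sum, giving $\lambda_{\max}(BB^T) \le \lambda_{\max}(B_\tau^T\Omega\Omega^T B_\tau) + \lambda_{\max}(B_c^T\Gamma\Gamma^T B_c)$ and the reverse inequality with $\lambda_{\min}$. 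This is the step that separates the process-noise term from the measurement-noise term.

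Then I would apply Lemma~\ref{minmax} to each summand, with the covariance product in the role of the Hermitian matrix $M$ and the structural block in the role of $Z$: for the first term, $M = \Omega\Omega^T$ and $Z = B_\tau = E^{-1}D_\tau$; for the second, $M = \Gamma\Gamma^T$ and $Z = B_c = R^T L_{e,s}$. The hypothesis of Lemma~\ref{minmax} --- that $Z$ has trivial kernel --- has to be verified: $B_\tau = E^{-1}D_\tau$ has full column rank because $E$ is invertible and $D_\tau$ is the incidence matrix of a spanning tree, and $B_c = R^T L_{e,s}$ has full column rank because $L_{e,s}$, the edge Laplacian of a tree with positive time scales as weights, is nonsingular and $R$ has full row rank. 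Lemma~\ref{minmax} then delivers $\lambda_{\min}(\Omega\Omega^T)\lambda_{\min}(B_\tau^T B_\tau)$ and $\lambda_{\min}(\Gamma\Gamma^T)\lambda_{\min}(B_c^T B_c)$ as lower bounds on the two summands, and the analogous products of $\lambda_{\max}$'s as upper bounds; re-adding these, combining with the Weyl step, and multiplying through by $P = \tr(P_O)$ via Lemma~\ref{h2bound} yields the claimed two-sided estimate.

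The block algebra and the eigenvalue manipulations are routine; the only thing to get exactly right is the pairing in the factorization $Z^T M Z$ so that the covariance eigenvalues come out isolated, together with checking the full-column-rank hypotheses above --- without them Lemma~\ref{minmax} fails and the lower bound degenerates. I would also read the $B_\tau B_\tau^T$ and $B_c B_c^T$ appearing in the statement as the Gram matrices $B_\tau^T B_\tau$ and $B_c^T B_c$ (equivalently, the smallest nonzero eigenvalues of $B_\tau B_\tau^T$ and $B_c B_c^T$), since the $n\times n$ form $B_\tau B_\tau^T$ has rank $n-1$ and is singular.
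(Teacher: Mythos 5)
Your proof follows the paper's argument exactly: the same block expansion $BB^T = B_\tau^T(\Omega\Omega^T)B_\tau + B_c^T(\Gamma\Gamma^T)B_c$, the same Weyl-inequality split of $\lambda_{\min}$ and $\lambda_{\max}$ over the sum, the same application of Lemma~\ref{minmax} to each summand, and the same final combination with Lemma~\ref{h2bound}. Your added care --- explicitly verifying the full-column-rank hypotheses for $B_\tau = E^{-1}D_\tau$ and $B_c = R^T L_{e,s}$, and noting that the $\lambda_{\min}(B_\tau B_\tau^T)$ in the lemma statement must be read as $\lambda_{\min}(B_\tau^T B_\tau)$ (the $n\times n$ Gram matrix is singular, which would make the lower bound degenerate) --- is actually more thorough than the published proof.
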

\begin{proof}
Expand $BB^T$ as,
\begin{align*}
BB^T & = D_\tau E^{-1} \Omega \Omega^T E^{-1} D_\tau^T + L_{e,s} R \Gamma \Gamma^T R^T L_{e,s} \\
& := B_{\tau}^T Q B_\tau + B_c^T G B_c,
\end{align*} 
where $Q = \Omega\Omega^T$, $G = \Gamma \Gamma^T$, $B_c = R^T L_{e,s}$, and $B_\tau = E^{-1} D_\tau^T$. Observe that $B_{\tau}^T Q B_\tau$ and $B_c^T G B_c$ are Hermitian. Thus, via Weyl's Inequality~\cite[Theorem 4.3.1]{Johnson1985},
\begin{align*}
\lambda_{\min}(B_{\tau}^T Q B_{\tau}) & + \lambda_{\min}(B_c^T G B_c) \\
& \leq \lambda_{\min} (B_{\tau}^T Q B_{\tau} + B_c^T G B_c), \\
\lambda_{\max} (B_{\tau}^T Q B_{\tau} & + B_c^T G B_c) \\
& \leq \lambda_{\max}(B_{\tau}^T Q B_{\tau}) + \lambda_{\max}(B_c^T G B_c).
\end{align*}
Lemma~\ref{minmax} can be applied to the individual $\lambda_{\min}(\cdot)$ and $\lambda_{\max}(\cdot)$ terms by noting that the spanning tree edge Laplacian has null space spanned by $\mathbf{0}$, so $B_c$ and $B_\tau$ satisfy the condition on $Z$ in Lemma~\ref{minmax}. Combining the resultant eigenvalue bounds with Lemma~\ref{h2bound} leads to the desired result.
\end{proof}

With Lemma~\ref{fullbound}, given any $\Omega$ and $\Gamma$, the $\Htwo$ performance can be bounded by the minimum and maximum eigenvalues of the covariances. Furthermore, if a convenient choice of $\Omega$ and $\Gamma$ exists, the bound illustrates that any other covariances sharing minimum and maximum eigenvalue properties will be covered by the same performance bounds. With that in mind, we propose the following covariance selection: $\Omega = \sigma_\omega E^{1/2}$ and $\Gamma = \sigma_v W^{1/2}$. By inspection,~\eqref{eq:lyapunov} then has the following solution,

\begin{equation} \label{eq:lysol}
X^\star = \frac{1}{2}\left(\sigma_w^2 (R W R^T)^{-1} + \sigma_v^2 L_{e,s}^\tau \right).
\end{equation}

Equation~\eqref{eq:lysol} is of particular interest as it shows that by associating the covariances with the magnitudes of the edge weights and the time scale parameters, the edge and node weightings are completely separated in their effect on the $\mathcal{H}_2$ performance, save for the placement of the $\sigma_{\omega}$ and $\sigma_v$ parameters. This choice does have the peculiar interpretation of seeming to remove the effects of the weighting and scaling, but the bound provided by Lemma~\ref{fullbound} allows this choice to apply to a wide range of more general covariance choices. Hereafter, then, we will investigate this separable solution. To aid in the consideration of these independent contributions later, we can define,
\begin{align}
\mathcal{H}_2(\Sigma) & = \frac{\sigma_{\omega}^2}{2} \mathbf{tr}(R^T (R W R^T)^{-1} R) + \frac{\sigma_v^2}{2} \mathbf{tr}(R^T L_{e,s}^\tau R) \nonumber \\
& := \mathcal{H}_2(\Sigma,W) + \mathcal{H}_2(\Sigma,E), \label{eq:cspart}
\end{align}
\noindent and similarly,
\begin{align}
\mathcal{H}_2(\hat{\Sigma}) & = \frac{\sigma_w^2}{2} \mathbf{tr}((R W R^T)^{-1}) + \frac{\sigma_v^2}{2} \mathbf{tr}(L_{e,s}^\tau) \nonumber \\
& := \mathcal{H}_2(\hat{\Sigma},W) + \mathcal{H}_2(\hat{\Sigma},E). \label{eq:tspart}
\end{align}
Furthermore, we can note that the tree-edge state and cycle state information can be separated. First, we start with the edge weight term,
\begin{align}
& \mathcal{H}_2(\Sigma,W) = \frac{\sigma_{\omega}^2}{2} \mathbf{tr}(R^T (R W R^T)^{-1} R) \nonumber \\
& = \frac{\sigma_{\omega}^2}{2} \mathbf{tr}\left(\begin{bmatrix}
I \\ \left(T^c_\tau)\right)^T
\end{bmatrix} (R W R^T)^{-1} \begin{bmatrix}
I & T^c_\tau 
\end{bmatrix} \right) \nonumber\\
& = \frac{\sigma_{\omega}^2}{2} \mathbf{tr}\left( \begin{bmatrix}
(R W R^T)^{-1} & (R W R^T)^{-1} T^c_\tau \\
(T^c_\tau)^T (R W R^T)^{-1} & (T^c_\tau)^T (R W R^T)^{-1} T^c_\tau 
\end{bmatrix} \right) \nonumber \\
& = \mathcal{H}_2(\hat{\Sigma},W) + \frac{\sigma_{\omega}^2}{2}\mathbf{tr}\left( (T^c_\tau)^T (R W R^T)^{-1} T^c_\tau \right). \label{eq:hatrelationW}
\end{align}
\noindent A similar relation can be found for the time scale term~\eqref{eq:tspart},
\begin{align}
\mathcal{H}_2(\Sigma,E) & = \frac{\sigma_v^2}{2} \mathbf{tr}(R^T L_{e,s}^\tau R) \nonumber \\
& = \frac{\sigma_v^2}{2} \mathbf{tr}\left( \begin{bmatrix}
L_{e,s}^\tau & L_{e,s}^\tau T^c_\tau \\
(T^c_\tau)^T L_{e,s}^\tau & (T^c_\tau)^T L_{e,s}^\tau T^c_\tau 
\end{bmatrix} \right) \nonumber \\
& = \mathcal{H}_2(\hat{\Sigma},E) + \frac{\sigma_v^2}{2} \mathbf{tr}\left( (T^c_\tau)^T L_{e,s}^\tau T^c_\tau \right). \label{eq:hatrelationE}
\end{align}
The simplifications to~\eqref{eq:cspart} and~\eqref{eq:tspart} show that the $\mathcal{H}_2$ performance of the $\Sigma$ system, which has output containing information from the cycle edge states, predictably contains the $\mathcal{H}_2$ performance for the $\hat{\Sigma}$ system as an isolated term. Taken together,~\eqref{eq:hatrelationW} and~\eqref{eq:hatrelationE} illustrate how the output information differences between~\eqref{eq:edgemodel} and~\eqref{eq:treemodel} influence the overall $\mathcal{H}_2$ performance for identical tree-edge-state dynamics. 

Finally, we can note that, similar to~\cite{Zelazo2011a}, the weighted cycles make closed-form solutions for terms containing $(R W R^T)^{\sm 1}$ difficult. However, analytic results are tractable in the case of tree graphs.


\subsubsection{Tree Graphs}
When the underlying graph topology is a tree, $R = I$, and~\eqref{eq:lysol} simplifies to,
\[
X^\star = \frac{1}{2}\left(\sigma_{\omega}^2 W^{\sm 1} + \sigma_v^2 L_{e,s}^\tau \right).
\]
\noindent Furthermore, in this case $\mathcal{H}_2(\Sigma_\tau) = \mathcal{H}_2(\hat{\Sigma}_\tau) = \mathbf{tr}(X^\star)$. A closed form solution for the performance in this case is given in the following lemma. 

\begin{lemma}\label{lem:tree} For a tree graph, the $\Htwo$ performance of the system is given by,
\begin{align}
\mathcal{H}_2(\Sigma_\tau) & = \frac{1}{2} \mathbf{tr}\left(\sigma_{\omega}^2 W^{-1} + \sigma_v^2 L_{e,s}^\tau \right) \nonumber \\
& = \frac{1}{2} \left( \sigma_{\omega}^2 \sum_{k=1}^{n-1} \frac{1}{w_{k}} + \sigma_v^2 \sum_{i = 1}^{n} \frac{\text{deg}(\nu_i)}{\epsilon_i} \right), \label{eq:treeresult}
\end{align}
\noindent where deg$(\nu_i)$ is the unweighted degree of agent $\nu_i$, and $k$ is an index over the edges. 
\end{lemma}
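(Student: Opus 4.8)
The plan is to evaluate the two trace terms in $\frac{1}{2}\tr(\sigma_\omega^2 W^{-1} + \sigma_v^2 L_{e,s}^\tau)$ separately, since the trace is linear. The first term is immediate: $W$ is the diagonal matrix of edge weights $w_1,\dots,w_{n-1}$ (a tree on $n$ nodes has $n-1$ edges), so $W^{-1}$ is diagonal with entries $1/w_k$ and $\tr(W^{-1}) = \sum_{k=1}^{n-1} 1/w_k$. The work is entirely in the second term, $\tr(L_{e,s}^\tau)$, where $L_{e,s}^\tau = D_\tau^T E^{-1} D_\tau$ with $D_\tau$ the $n \times (n-1)$ incidence matrix of the tree and $E = \mathrm{diag}(\epsilon_1,\dots,\epsilon_n)$.

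First I would compute the diagonal entries of $D_\tau^T E^{-1} D_\tau$ directly. Writing $a_l = e_{i} - e_{j}$ for the column of $D_\tau$ indexed by edge $l = \kappa(ij)$, the $l$-th diagonal entry is $a_l^T E^{-1} a_l = (e_i - e_j)^T E^{-1} (e_i - e_j) = 1/\epsilon_i + 1/\epsilon_j$. Summing over all edges $l$ of the tree gives $\tr(L_{e,s}^\tau) = \sum_{(i,j)\in\Edges} (1/\epsilon_i + 1/\epsilon_j)$. The key combinatorial observation is then that, upon regrouping this sum by node rather than by edge, the term $1/\epsilon_i$ appears exactly once for each edge incident to $\nu_i$, i.e. $\deg(\nu_i)$ times; hence $\tr(L_{e,s}^\tau) = \sum_{i=1}^n \deg(\nu_i)/\epsilon_i$. (Equivalently, one can note $D_\tau D_\tau^T$ is the unweighted graph Laplacian whose diagonal is $\deg(\nu_i)$, and use $\tr(D_\tau^T E^{-1} D_\tau) = \tr(E^{-1} D_\tau D_\tau^T) = \sum_i \deg(\nu_i)/\epsilon_i$ via cyclicity of the trace — this is the cleaner route.) Combining the two pieces and the factor $\tfrac12$ yields~\eqref{eq:treeresult}.

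There is no real obstacle here; the only thing to be careful about is the bookkeeping that $R = I$ for a tree (so that $\mathcal{H}_2(\Sigma_\tau) = \mathcal{H}_2(\hat\Sigma_\tau) = \tr(X^\star)$ as already noted in the text, and the cross terms in~\eqref{eq:hatrelationW}--\eqref{eq:hatrelationE} vanish since $T^c_\tau$ is empty), and that the edge index $k$ ranges over the $n-1$ tree edges while the node index $i$ ranges over all $n$ nodes. I expect the "hardest" step — really just the crux — to be recognizing the edge-sum-to-node-sum regrouping $\sum_{(i,j)\in\Edges}(1/\epsilon_i + 1/\epsilon_j) = \sum_{i=1}^n \deg(\nu_i)/\epsilon_i$, which the trace-cyclicity argument makes transparent.
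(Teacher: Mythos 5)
Your proposal is correct and follows essentially the same route as the paper: the diagonal of $W^{-1}$ gives the first sum directly, and the second sum comes from computing $[L_{e,s}^\tau]_{kk} = a_k^T E^{-1}a_k = \epsilon_i^{-1}+\epsilon_j^{-1}$ and regrouping the edge sum by node via the degree count. Your trace-cyclicity remark $\tr(D_\tau^T E^{-1}D_\tau)=\tr(E^{-1}D_\tau D_\tau^T)$ is a slightly cleaner packaging of the same counting argument, but not a substantively different proof.
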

\begin{proof}
The first term follows from the fact that $W$ is a diagonal matrix of weights, so the trace of the inverse is simply the sum of the inverted weights. For the second term, consider one of the diagonal elements
$[L_{e,s}^\tau]_{kk} = a_k^T E^{-1} a_k = \epsilon_i^{-1} + \epsilon_j^{-1},$
\noindent where $a_k$ is the edge vector corresponding to the edge between nodes $i$ and $j$, that is, $k = \kappa(ij)$. Now consider a node $\nu_i$. In the sum over all edges of the graph, $\epsilon_i^{-1}$ will appear once for every edge that connects $\nu_i$ to its neighbors, which is the unweighted degree of $\nu_i$. Considering all other nodes yields the second term. 
\end{proof}







Lemma~\ref{lem:tree} shows a trade-off between the time scales and the network topology, determining the overall performance of the network. Also, note that for a given distribution of scaling parameters and edge weights, changing the assignment of edge weights does not affect the $\mathcal{H}_2$ performance, while the assignment of scaling parameters does.
This is in line with results in the context of single-input influenced consensus~\cite{Foight2019}, and shown in the following example. 

{\bf Example:} 
Consider the tree graph on six nodes in Figure~\ref{fig:tree}. Assume that we have some distribution of edge weights and node scaling parameters such that $\sum_i w_i^{-1} = 2\alpha$, $\sum_i \epsilon_i = 1$, and further, that $\epsilon_i \in (0.1,0.2,0.4)$. Also, let $\sigma_v = \sigma_{\omega} = 1$. 
\begin{figure}[H]
\centering
\includegraphics[scale=0.3]{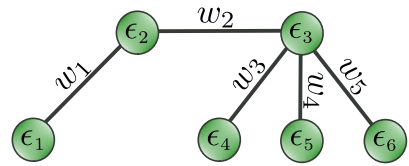}
\caption{Tree graph $\mathcal{T}$ for six agents. Each agent $i$ has an associated scaling parameter, $\epsilon_i$, and each $j$th edge is labeled $w_j$. Agent 3 has the highest degree.}
\label{fig:tree}
\end{figure}
Now, for any assignment of weights, the first term in~\eqref{eq:treeresult} will be $2 \alpha$, but the second term depends on the assignment of the scaling parameters. Consider the two following assignments: (1) set
$\epsilon_{1,4,5,6} = 0.1$, $\epsilon_2 = 0.2$, $\epsilon_3 = 0.4$.
In this case, the second term is equal to $\sum_{i=1}^{6} \text{deg}(i)/\epsilon_i = 60.0$ resulting in a combined $\mathcal{H}_2 = 30 + \alpha$. Now, consider a different distribution of scaling parameters.
(2) set $\epsilon_{1,3,4,5} = 0.1$, $\epsilon_2 = 0.2$, $\epsilon_6 = 0.4$. We can see that with agent 3 on a faster time scale, the second term suffers, $\sum_{i=1}^{6} \text{deg}(i)/\epsilon_i = 82.5$, resulting in a comparatively higher performance value of $\mathcal{H}_2 = 41.25 + \alpha$. 
Thus, we can see that high-degree nodes with slower time scales results in lower $\mathcal{H}_2$ performance. 
\subsection{Timescale Design For $\mathcal{H}_2$ Resilience} \label{sec:ts-design}

The results of the previous section suggest that a heuristic for minimizing the $\mathcal{H}_2$ performance is to assign slower timescales to high-degree agents. To investigate whether this holds in the presence of cycles, note that the minimization of $\mathcal{H}_2(\Sigma)$ can be formulated as a convex problem,

\begin{equation}
\begin{aligned}
\min_{\epsilon_{1}^{-1},\dots,\epsilon_n^{-1}}\ &\ \mathbf{tr}\left(R X R^T\right) \\
\text{s.t.}\ &\ {\epsilon}^{-1}_{\max} \leq {\epsilon_i}^{-1} \leq {\epsilon}^{-1}_{\min},\ \forall i \in \Nodes \\
&\ \mu \leq \sum_{i=1}^{n} {\epsilon_i^{-1}}, \\
&\ X = \frac{1}{2} \left((R W R^T)^{-1} + L_{e,s}^\tau \right),
\end{aligned} \tag{P1} \label{eq:convex}
\end{equation}

\noindent where we have taken the effective variances, $\sigma_{\omega}$ and $\sigma_v$, to be unity. The objective is convex in the optimization variables $(1/\epsilon_1,\dots,1/\epsilon_n)$, and the constraints are linear. The design parameter of $\mu$ serves to ensure that not all the agents can operate on the slowest time scale (the trivial solution).

We solved Problem~\eqref{eq:convex} on random graphs (with probability of an edge between any two nodes as 0.15) that featured multiple independent cycles. For $n=10$, $\epsilon_{\min} = 0.01$, $\epsilon_{\max} = 2.0$ and $\mu = 510.5$ (which can be interpreted as the value allowing up to a third of the nodes to be slow), the results and graph topology for one such graph are presented Figure~\ref{fig:cycleex}. These results (which appear to hold over a wide range of randomly generated graphs) suggest that the presence of cycles do not detract from the heuristic developed for tree graphs; that high degree nodes should be assigned slow time scales (high scaling parameters) to minimize $\mathcal{H}_2(\Sigma)$. 
\begin{figure}[H]
\centering
\vspace{-4mm}
\begin{subfigure}[b]{\linewidth}
\centerline{
\includegraphics[scale=0.35]{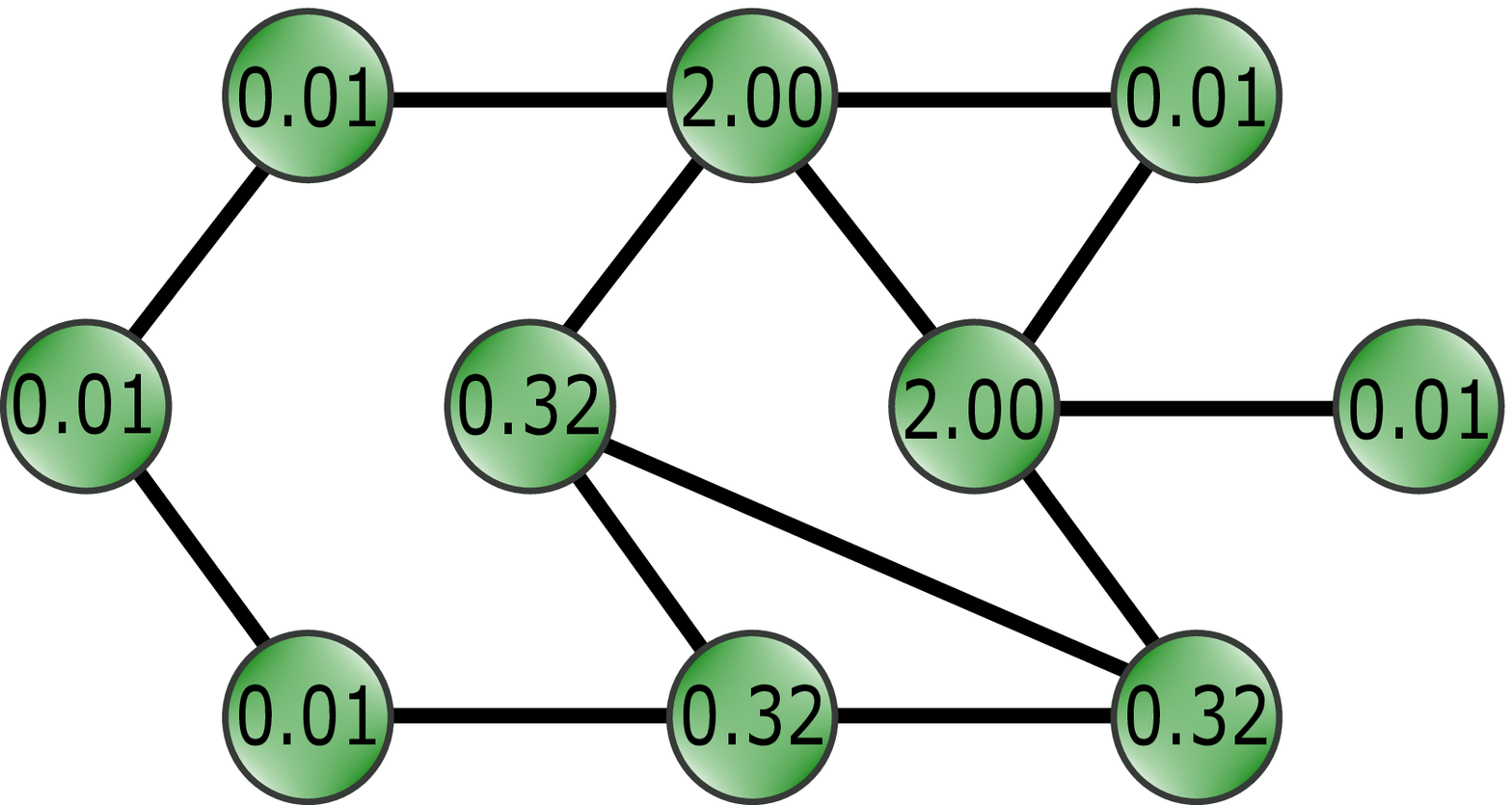}}
\caption{Random graph topology for 10 agents.}
\label{fig:cycleex}
\end{subfigure}

\begin{subfigure}[b]{0.48\linewidth}
\includegraphics[width=0.95\linewidth]{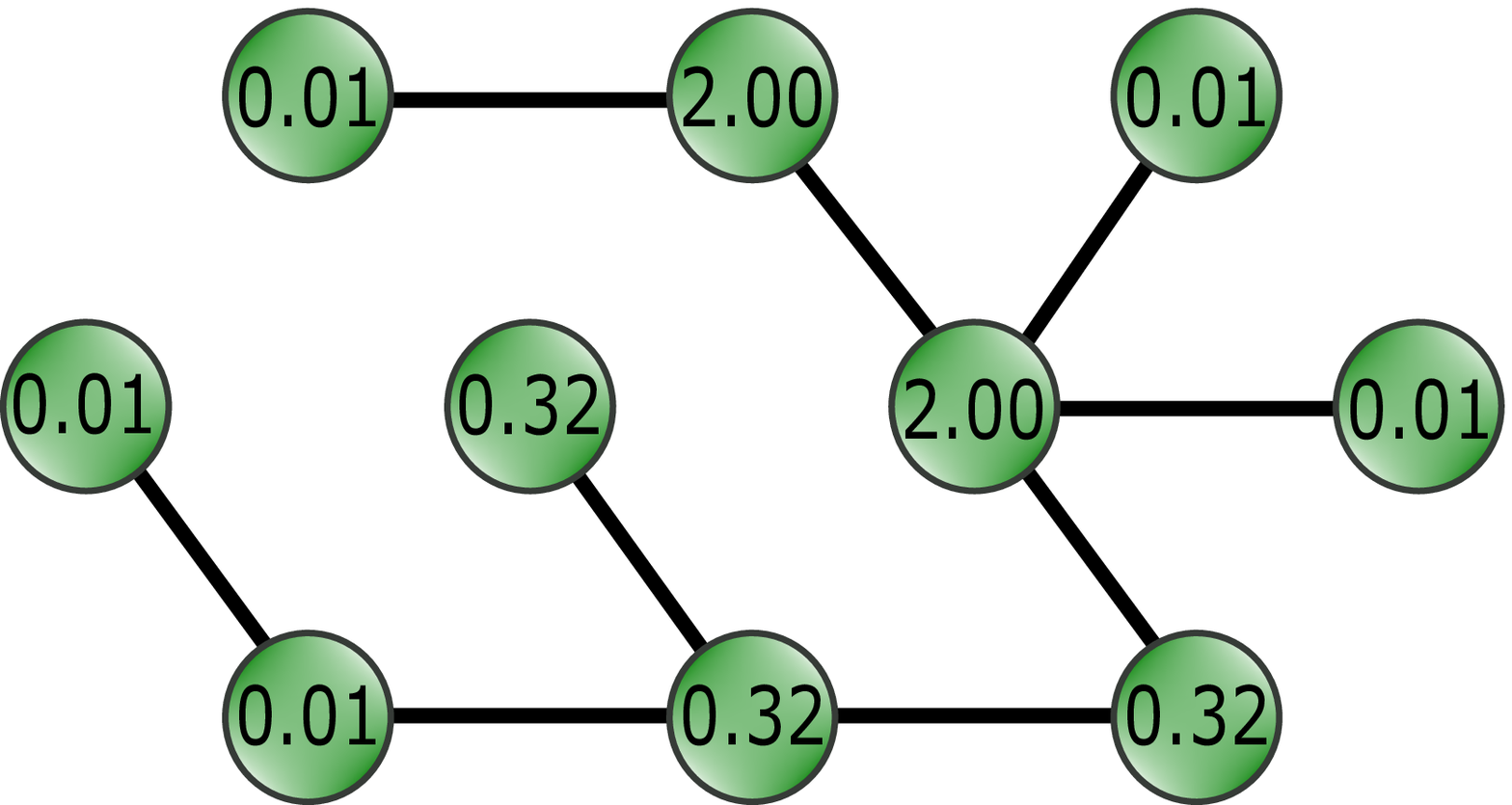}
\caption{Original spanning tree.}
\label{fig:cycleex_spanning1}
\end{subfigure}
\begin{subfigure}[b]{0.48\linewidth}
\includegraphics[width=0.95\linewidth]{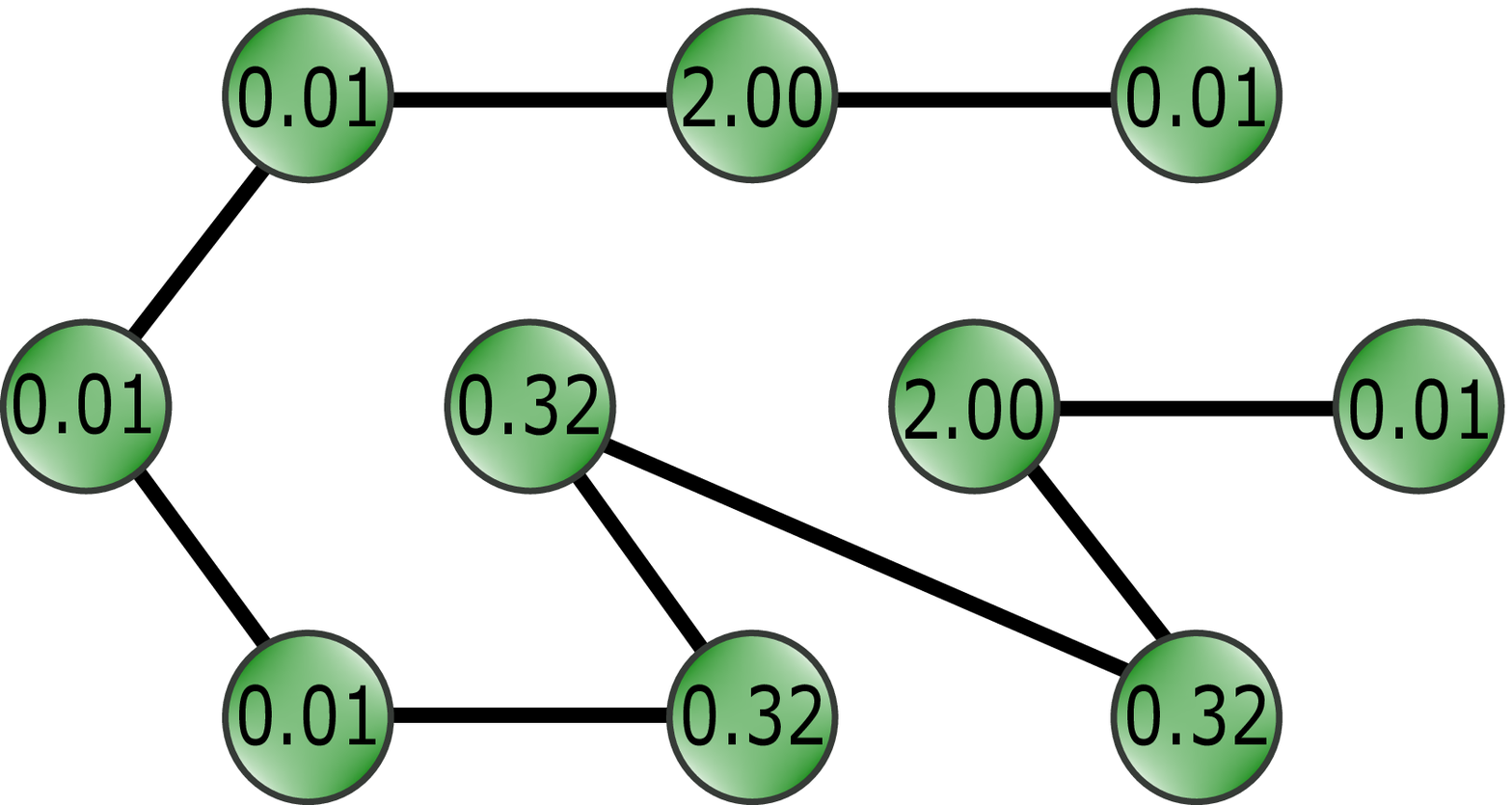}
\caption{Spanning path graph.}
\label{fig:cycleex_spanning2}
\end{subfigure}
\vspace{-1mm}
\caption{Time scale assignment by~\eqref{eq:convex} are printed in each node, showing the slowest time scales are assigned to nodes with highest degree.}
\end{figure}
A note of further interest is that, due to the inclusion of the cycle information in the output in~\eqref{eq:treemodel}, the optimal distribution from~\eqref{eq:convex} is independent of the selected spanning tree. We can see this by generating spanning trees with a variety of degree distributions, such as those in Figures~\ref{fig:cycleex_spanning1} and~\ref{fig:cycleex_spanning2}. However, we can recall from~\eqref{eq:hatrelationE} and~\eqref{eq:hatrelationW} that the $\Htwo$ performance for the $\Sigma$ system can be viewed as the performance for the $\hat{\Sigma}$ system with an additive term that encompasses the contribution of the cycle states. Consider then, the quantity
%
\begin{equation*}
K = \frac{\Htwo(\hat{\Sigma},E)}{\Htwo(\Sigma,E)},
\end{equation*}
\noindent which is a measure of how well the performance as measured by the spanning tree states represents the graph performance including cycle information. For the spanning tree in Figure~\ref{fig:cycleex_spanning1} we have $K \simeq 0.66$, and for the spanning tree in Figure~\ref{fig:cycleex_spanning2} we have $K \simeq 0.24$. Intuitively, this reflects that spanning trees which more accurately reflect the true degree distribution of the parent graph will have a higher $K$. In line with~\cite{Zelazo2011a}, this also shows a significant portion of the $\Htwo$ performance can come from the cycle contributions. In general, then, the performance of a given spanning tree may not be a good indicator of the full network performance, however, for graphs with few cycles, spanning trees that reproduce the full graph degree distribution closely can be a good approximation for the full network performance. 

The time scale assignment problem considered here demonstrates that while the heuristic developed from results on tree graphs appears to hold for more complex graph topologies, the performance of a given spanning tree does not necessarily reflect the performance of the full tree. In the next section, we will consider a reformulation of this problem that allows for an analytic result to the optimal assignment while also reformulating the performance constraint.


\subsection{Decentralized Updates for Optimal $\mathcal{H}_2$ Performance} \label{sec:gu-design}

In the previous section,~\eqref{eq:convex} included a design parameter to ensure that the trivial solution was avoided, however, computation of that parameter required complete knowledge of the global topology and time scale distribution. In response to an adversarial attack, such as malicious noise being injected into the system, it is of interest for the network to be able to quickly and autonomously adapt to minimize the effect of this influence. In light of this desire, consider~\eqref{eq:update},
\begin{align}
\begin{aligned}
\min_{\epsilon_1^{-1},\dots,\epsilon_n^{-1}}\ &\ \frac{1}{2} \mathbf{tr}\left(R^T L_{e,s}^\tau  R\right) + \frac{h}{2}\sum_{i=1}^n \epsilon_i^r \\
\text{s.t.}\ &\ {\epsilon}^{-1}_{\max} \leq {\epsilon_i}^{-1} \leq {\epsilon}^{-1}_{\min}\ \forall i \in \Nodes.
\end{aligned} \label{eq:update} \tag{P2} 
\end{align}
This is a minimization of the time scale portion of the separated $\mathcal{H}_2$ performance. In lieu of the sum constraint $\mu \leq \sum_i \epsilon_i^{-1}$, we introduce a regularization term $2^{-1}h \sum_{i=1}^n \epsilon_i^r$ which achieves a similar goal of penalizing large timescales for all nodes assuming positive, integer $r$. 

\begin{proposition}[Analytic Optimal Time Scale Assignment]
\label{prop:1}
Consider~\eqref{eq:update}. Let the region defined by the box constraints on $1/\epsilon_i$ be denoted by $\mathcal{C}$. Then, the minimizing assignment of time scale parameters is given by,
\[
\epsilon_i^\ast = \text{Proj}_\mathcal{C}\left[\left(\frac{\text{deg}(\nu_i)}{h r}\right)^{\frac{1}{r+1}} \right].
\]
\end{proposition}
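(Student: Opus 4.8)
The plan is to reduce \eqref{eq:update} to a separable, box-constrained convex program whose coordinates decouple, and then solve each scalar subproblem in closed form. The first step is to rewrite the cost $\tfrac12\,\mathbf{tr}(R^{T} L_{e,s}^{\tau} R)$ explicitly in terms of $\epsilon_1,\dots,\epsilon_n$. Using $L_{e,s}^{\tau} = D_\tau^{T} E^{-1} D_\tau$ and $R = \begin{bmatrix} I & T_\tau^{c}\end{bmatrix}$ we have $R^{T} L_{e,s}^{\tau} R = (D_\tau R)^{T} E^{-1}(D_\tau R)$, and since $D_\tau(D_\tau^{T}D_\tau)^{-1}D_\tau^{T}$ is the orthogonal projector onto $\mathrm{col}(D_\tau)=\mathbf{1}^{\perp}$, which contains every column of $D(\graph_c)$, the definition of $T_\tau^{c}$ gives $D_\tau T_\tau^{c} = D(\graph_c)$ and hence $D_\tau R = \begin{bmatrix} D_\tau & D(\graph_c)\end{bmatrix} = D(\graph)$ up to a harmless reordering of columns. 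Therefore $\mathbf{tr}(R^{T} L_{e,s}^{\tau} R) = \mathbf{tr}(D(\graph)^{T} E^{-1} D(\graph))$, and repeating the per-edge counting argument from the proof of Lemma~\ref{lem:tree} --- now over all edges of $\graph$ rather than of the spanning tree --- yields $\mathbf{tr}(R^{T} L_{e,s}^{\tau} R) = \sum_{i=1}^{n}\deg(\nu_i)/\epsilon_i$, with $\deg(\nu_i)$ the degree of $\nu_i$ in the parent graph. Thus the objective of \eqref{eq:update} equals $f(\epsilon) = \tfrac12\sum_{i=1}^{n}\deg(\nu_i)\,\epsilon_i^{-1} + \tfrac{h}{2}\sum_{i=1}^{n}\epsilon_i^{r}$.

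Next, I would exploit that both $f$ and the box constraints separate across $i$: the program is equivalent to $n$ independent scalar problems $\min_{\epsilon_i\in[\epsilon_{\min},\epsilon_{\max}]} f_i(\epsilon_i)$ with $f_i(\epsilon_i) = \tfrac12\deg(\nu_i)\,\epsilon_i^{-1} + \tfrac{h}{2}\epsilon_i^{r}$. Each $f_i$ is strictly convex on $(0,\infty)$: for $h>0$ and integer $r\ge1$, $f_i''(\epsilon_i) = \deg(\nu_i)\,\epsilon_i^{-3} + \tfrac{h}{2}r(r-1)\epsilon_i^{r-2}>0$, since $\deg(\nu_i)\ge1$ on a connected graph makes the first term strictly positive while the second is nonnegative. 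Equivalently, in the paper's variables $y_i = 1/\epsilon_i$ the cost $\tfrac12\deg(\nu_i)y_i + \tfrac{h}{2}y_i^{-r}$ is strictly convex on $y_i>0$, so the parametrization is immaterial. Then the stationarity equation $f_i'(\epsilon_i) = -\tfrac12\deg(\nu_i)\,\epsilon_i^{-2} + \tfrac{h}{2}r\,\epsilon_i^{r-1}=0$ has the unique positive root $\epsilon_i^{\mathrm{unc}} = \bigl(\deg(\nu_i)/(hr)\bigr)^{1/(r+1)}$, which by strict convexity is the unique unconstrained minimizer. Since $f_i'$ is strictly increasing with its single sign change at $\epsilon_i^{\mathrm{unc}}$, on the compact feasible interval the constrained minimizer is $\epsilon_i^{\mathrm{unc}}$ if it is feasible, $\epsilon_{\min}$ if $\epsilon_i^{\mathrm{unc}}<\epsilon_{\min}$ (where $f_i$ is increasing), and $\epsilon_{\max}$ if $\epsilon_i^{\mathrm{unc}}>\epsilon_{\max}$ (where $f_i$ is decreasing) --- i.e. the minimizer is the projection of $\epsilon_i^{\mathrm{unc}}$ onto $\mathcal{C}$, which is exactly the claimed formula.

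I expect the only step needing genuine care, as opposed to routine calculus, to be the first one: establishing $D_\tau R = D(\graph)$ so that the abstract cost $\mathbf{tr}(R^{T}L_{e,s}^{\tau}R)$ collapses to the full-graph degree sum; once that reduction is in place the remainder is a textbook separable box-constrained convex program. A secondary point worth stating explicitly is connectedness, which is what forces $\deg(\nu_i)\ge1$ and hence the strict convexity and uniqueness used above.
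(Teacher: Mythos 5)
Your proof is correct and follows essentially the same route as the paper: reduce the trace term to $\tfrac12\sum_i \deg(\nu_i)/\epsilon_i$, set the gradient of the unconstrained cost to zero, and project onto the box. You additionally supply the details the paper leaves implicit --- the identity $D_\tau R = D(\graph)$ behind the degree-sum reduction, and the observation that projecting the unconstrained stationary point is only legitimate because the problem separates coordinate-wise into strictly convex scalar problems on an interval --- which is a worthwhile tightening rather than a different argument.
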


\begin{proof}
Consider the cost function without the box constraint. Minimizing the cost alone can be achieved by setting its gradient equal to zero,
\begin{align*}
\frac{\partial f}{\partial \epsilon_i^{-1}} & = \frac{\text{deg}(\nu_i)}{2} - \frac{h r}{2} (\epsilon_i^{-1})^{-(r+1)}=0 \\
\epsilon_i^\ast & = \left(\frac{\text{deg}(\nu_i)}{h r}\right)^{\frac{1}{r+1}}.
\end{align*}
Projecting this result onto the constraint set gives the result.
\end{proof}

\begin{remark}
The assignment rule in Proposition~\ref{prop:1} is decentralized, as the optimal assignment value depends only on the (unweighted) degree of the $i$-th node and the parameters $h$ and $r$, which are locally known to the $i$-th node without global knowledge of the network topology.
\end{remark}

From this result we can see that for a class of regularization terms, the optimal time scale assignment is again driven by the degree distribution, which is in-line with the previous results. It is conceivable to consider using this result with online signal identification to locally adjust time scales in response to adversarial noise entering the system.

\section{Concluding remarks}

In this paper we have investigated how independent agent-based time scales can be designed to minimize the $\Htwo$-norm of consensus systems. We showed that for a convenient choice of noise covariances, the performance contributions of edge weights and time scaling are separable. This allowed for the independent consideration of time scale design for minimization of the $\Htwo$-norm. The contributions of this work have been an extension of previous methods into a framework which includes weighting and time scaling. We also identified a heuristic for the design of time scale parameters for network resilience, namely, that nodes of high degree have a large impact on the performance of the network and assigning them slow time scales can mitigate this effect. 

\section*{ACKNOWLEDGMENT}

This material is based upon work supported by the National Science Foundation Graduate Research Fellowship Program. Any opinions, findings, and conclusions or recommendations expressed in this material are those of the author(s) and do not necessarily reflect the views of the National Science Foundation.

\bibliographystyle{IEEEtran}
\bibliography{ref}

\begin{thebibliography}{10}
\providecommand{\url}[1]{#1}
\csname url@samestyle\endcsname
\providecommand{\newblock}{\relax}
\providecommand{\bibinfo}[2]{#2}
\providecommand{\BIBentrySTDinterwordspacing}{\spaceskip=0pt\relax}
\providecommand{\BIBentryALTinterwordstretchfactor}{4}
\providecommand{\BIBentryALTinterwordspacing}{\spaceskip=\fontdimen2\font plus
\BIBentryALTinterwordstretchfactor\fontdimen3\font minus
  \fontdimen4\font\relax}
\providecommand{\BIBforeignlanguage}[2]{{%
\expandafter\ifx\csname l@#1\endcsname\relax
\typeout{** WARNING: IEEEtran.bst: No hyphenation pattern has been}%
\typeout{** loaded for the language `#1'. Using the pattern for}%
\typeout{** the default language instead.}%
\else
\language=\csname l@#1\endcsname
\fi
#2}}
\providecommand{\BIBdecl}{\relax}
\BIBdecl

\bibitem{Joordens2009}
M.~A. Joordens and M.~Jamshidi, ``{Underwater swarm robotics consensus
  control},'' in \emph{Proc. IEEE International Conference on Systems, Man and
  Cybernetics}, no. October, San Antonio, USA, 2009, pp. 3163--3168.

\bibitem{Hudobadebadyn2018}
M.~Hudoba~de Badyn, U.~Eren, B.~A{\c{c}}ıkme{\c{s}}e, and M.~Mesbahi,
  ``{Optimal mass transport and kernel density estimation for state-dependent
  networked dynamic systems},'' in \emph{Proc. 57th IEEE Conference on Decision
  and Control}, Miami Beach, USA, 2018.

\bibitem{Olfati-Saber2005}
R.~Olfati-Saber, ``{Distributed Kalman filter with embedded consensus
  filters},'' in \emph{Proceedings of the 44th IEEE Conference on Decision and
  Control and the European Control Conference}, Seville, Spain, 2005, pp.
  8179--8184.

\bibitem{HudobadebadynFillt2017}
M.~Hudoba~de Badyn and M.~Mesbahi, ``{Large-scale distributed Kalman filtering
  via an optimization approach},'' \emph{IFAC-PapersOnLine}, vol.~50, no.~1,
  pp. 10\,742--10\,747, 2017.

\bibitem{Chen2013a}
Y.~Chen, J.~Lu, X.~Yu, and D.~J. Hill, ``{Multi-agent systems with dynamical
  topologies: Consensus and applications},'' \emph{IEEE Circuits and Systems
  Magazine}, vol.~13, no.~3, pp. 21--34, 2013.

\bibitem{Saber2003}
R.~Olfati-Saber and R.~M. Murray, ``{Consensus protocols for networks of
  dynamic agents},'' \emph{Proc. of the American Control Conference}, vol.~2,
  pp. 951--956, 2003.

\bibitem{Tanner2004}
H.~G. Tanner, G.~J. Pappas, and V.~Kumar, ``{Leader-to-formation stability},''
  \emph{IEEE Transactions on Robotics and Automation}, vol.~20, no.~3, pp.
  443--455, 2004.

\bibitem{Siami2014a}
M.~Siami and N.~Motee, ``{Fundamental limits and tradeoffs on disturbance
  propagation in linear dynamical networks},'' \emph{IEEE Transaction on
  Automatic Control}, vol.~61, no.~12, pp. 4055--4062, 2016.

\bibitem{Chapman2013a}
A.~Chapman and M.~Mesbahi, ``{Semi-autonomous consensus: Network measures and
  adaptive trees},'' \emph{IEEE Transactions on Automatic Control}, vol.~58,
  no.~1, pp. 19--31, 2013.

\bibitem{Chapman2015}
A.~Chapman, E.~Schoof, and M.~Mesbahi, ``{Online Adaptive Network Design for
  Disturbance Rejection},'' in \emph{Principles of Cyber-Physical
  Systems}.\hskip 1em plus 0.5em minus 0.4em\relax Cambridge University Press,
  2015.

\bibitem{Hudobadebadyn2019}
M.~Hudoba~de Badyn and M.~Mesbahi, ``{Efficient computation of performance on
  series-parallel networks},'' in \emph{Proc. American Control Conference},
  Philadelphia, USA, 2019, pp. 1--6.

\bibitem{Bamieh2012}
B.~Bamieh, M.~R. Jovanovi{\'{c}}, P.~Mitra, and S.~Patterson, ``{Coherence in
  Large-Scale Networks: Dimension-Dependent Limitations of Local Feedback},''
  \emph{IEEE Transactions on Automatic Control}, vol.~57, no.~9, pp.
  2235--2249, 2012.

\bibitem{Patterson2010}
S.~Patterson and B.~Bamieh, ``Leader selection for optimal network coherence,''
  in \emph{49th IEEE Conference on Decision and Control (CDC)}.\hskip 1em plus
  0.5em minus 0.4em\relax IEEE, 2010, pp. 2692--2697.

\bibitem{Patterson2014}
------, ``Consensus and coherence in fractal networks,'' \emph{IEEE
  Transactions on Control of Network Systems}, vol.~1, no.~4, pp. 338--348,
  2014.

\bibitem{Zelazo2011a}
D.~Zelazo and M.~Mesbahi, ``{Edge agreement: Graph-theoretic performance bounds
  and passivity analysis},'' \emph{IEEE Transactions on Automatic Control},
  vol.~56, no.~3, pp. 544--555, 2011.

\bibitem{Zelazo2013}
D.~Zelazo, S.~Schuler, and F.~Allg{\"{o}}wer, ``{Performance and design of
  cycles in consensus networks},'' \emph{Systems and Control Letters}, vol.~62,
  no.~1, pp. 85--96, 2013.

\bibitem{Kokotovic1999}
P.~Kokotovi{\'c}, H.~K. Khalil, and J.~O'Reilly, \emph{Singular Perturbation
  Methods in Control: Analysis and Design}.\hskip 1em plus 0.5em minus
  0.4em\relax SIAM, 1999.

\bibitem{Dorfler2018}
F.~D{\"{o}}rfler, ``{Electric networks and algebraic graph theory: Models,
  properties and applications},'' \emph{Proceedings of the IEEE}, vol. 106,
  no.~5, pp. 997--1005, 2018.

\bibitem{Chakrabortty2013}
A.~Chakrabortty and P.~P. Khargonekar, ``Introduction to wide-area control of
  power systems,'' in \emph{2013 American Control Conference}.\hskip 1em plus
  0.5em minus 0.4em\relax IEEE, 2013, pp. 6758--6770.

\bibitem{Olfati-Saber2004}
R.~Olfati-Saber and R.~M. Murray, ``{Consensus problems in networks of agents
  with switching topology and time-delays},'' \emph{IEEE Transactions on
  Automatic Control}, vol.~49, no.~9, pp. 1520--1533, 2004.

\bibitem{Pedroche2014}
F.~Pedroche~S{\'a}nchez, M.~Rebollo~Pedruelo, C.~Carrascosa~Casamayor, and
  A.~Palomares~Chust, ``Convergence of weighted-average consensus for
  undirected graphs,'' in \emph{International Journal of Complex Systems in
  Science}, vol.~4, no.~1, 2014, pp. 13--16.

\bibitem{Chapman2016}
A.~Chapman and M.~Mesbahi, ``Multiple time-scales in network-of-networks,'' in
  \emph{2016 American Control Conference (ACC)}, July 2016, pp. 5563--5568.

\bibitem{Awad2018}
A.~Awad, A.~Chapman, E.~Schoof, A.~Narang-Siddarth, and M.~Mesbahi,
  ``{Time-Scale Separation in Networks: State-Dependent Graphs and Consensus
  Tracking},'' \emph{IEEE Transactions on Control of Network Systems}, pp.
  1--1, 2018.

\bibitem{Rejeb2016}
J.~B. Rejeb, I.-C. Mor{\u{a}}rescu, and J.~Daafouz, ``Synchronization in
  networks of linear singularly perturbed systems,'' in \emph{2016 American
  Control Conference (ACC)}, July 2016, pp. 4293--4298.

\bibitem{Rejeb2018}
J.~B. Rejeb, I.~C. Mor{\u{a}}rescu, and J.~Daafouz, ``{Control design with
  guaranteed cost for synchronization in networks of linear singularly
  perturbed systems},'' \emph{Automatica}, vol.~91, pp. 89--97, 2018.

\bibitem{Foight2019}
D.~R. Foight and M.~Mesbahi, ``Influenced consensus for multi-scale networks,''
  in \emph{2019 American Control Conference (ACC)}.\hskip 1em plus 0.5em minus
  0.4em\relax IEEE, 2019, pp. 2753--2758.

\bibitem{skogestad2001}
S.~Skogestad and I.~Postlethwaite, \emph{Multivariable Feedback Control,
  Analysis and design}.\hskip 1em plus 0.5em minus 0.4em\relax John Wiley \&
  Sons, 2001.

\bibitem{fang1994}
Y.~Fang, K.~A. Loparo, and X.~Feng, ``Inequalities for the trace of matrix
  product,'' \emph{IEEE Transactions on Automatic Control}, vol.~39, no.~12,
  pp. 2489--2490, 1994.

\bibitem{Johnson1985}
C.~R. Johnson and R.~A. Horn, \emph{Matrix analysis}.\hskip 1em plus 0.5em
  minus 0.4em\relax Cambridge university press Cambridge, 1985.

\end{thebibliography}

\end{document}